\newtheorem{theorem}{Theorem}[section]
\newtheorem{lemma}[theorem]{Lemma}
\newtheorem{corollary}[theorem]{Corollary}
\newtheorem{conjecture}[theorem]{Conjecture}
\theoremstyle{definition}
\newtheorem{Def}[theorem]{Definition}
\theoremstyle{remark}
\newtheorem{remark}[theorem]{Remark}
\numberwithin{equation}{section}
\DeclareMathOperator\asc{asc}
\DeclareMathOperator\des{des}
\DeclareMathOperator\plat{plat}
\DeclareMathOperator\dasc{dasc}
\DeclareMathOperator\sddes{sddes}
\DeclareMathOperator\fdesp{fdesp}
\DeclareMathOperator\ascpp{ascpp}
\DeclareMathOperator\mdup{mdup}
\DeclareMathOperator\fplat{fplat}
\DeclareMathOperator\sdes{sdes}
\DeclareMathOperator\mdes{mdes}
\DeclareMathOperator\uplat{uplat}
\DeclareMathOperator\JS{JS}
\DeclareMathOperator\Jsp{JSP}
\DeclareMathOperator\Orb{Orb}
\renewcommand{\S}{\mathfrak{S}}
\def\JSP{\mathcal{JSP}}
\newcommand{\m}{{\bf m}}
\newcommand{\Q}{\mathcal{Q}}
\newcommand{\Z}{\mathbb{Z}}
\begin{document}

\title[Stirling permutations and partial $\gamma$-positivity]
{Plateaux on generalized Stirling permutations and partial $\gamma$-positivity}

\author[Z. Lin]{Zhicong Lin}
\address[Zhicong Lin]{Research Center for Mathematics and Interdisciplinary Sciences, Shandong University, Qingdao 266237, P.R. China}
\email{linz@sdu.edu.cn}

\author[J. Ma]{Jun Ma}
\address[Jun Ma]{Department of mathematics, Shanghai Jiao Tong University, Shanghai 200240, P.R. China}
\email{majun904@sjtu.edu.cn}

\author[P.B. Zhang]{Philip B. Zhang} 
\address[Philip B. Zhang]{College of Mathematical Science, Tianjin Normal University, Tianjin 300387, P.R. China}
\email{zhang@tjnu.edu.cn}

\begin{abstract}
We prove that the enumerative polynomials of generalized Stirling permutations by the statistics of  plateaux, descents and ascents are partial $\gamma$-positive. Specialization of our result to the Jacobi-Stirling permutations confirms a recent partial $\gamma$-positivity conjecture due to Ma, Yeh and the second named author. Our partial $\gamma$-positivity expansion, as well as a combinatorial interpretation for the corresponding $\gamma$-coefficients, are obtained via the machine of context-free grammars and a  group action on generalized Stirling permutations. 
Besides, we also provide an alternative approach to the partial $\gamma$-positivity  from the  stability of certain multivariate polynomials.
\end{abstract}

\keywords{Plateaux;  Stirling permutations; Jacobi-Stirling polynomials; Context-free grammars; Partial $\gamma$-positivity.}

\maketitle

\section{Introduction}

Let $\mathcal{A}$ be an alphabet whose elements are totally ordered. For a word $w=w_1\cdots w_n\in\mathcal{A}^n$, an index $i$, $0\leq i\leq n$, is an {\em ascent} (resp.~a {\em plateau}, a {\em descent}) of $w$ if $w_i<w_{i+1}$ (resp.~$w_i=w_{i+1}$, $w_i>w_{i+1}$), where we use the convention that $w_0=w_{n+1}=\hat{0}$. Here $\hat{0}$ is considered as an extra element smaller than all letters in $\mathcal{A}$. For instance, if $w=11211\in\mathbb{P}^5$, then $0,2$ are ascents, $1,4$ are plateaux and $3,5$ are descents of $w$. Let $\asc(w)$ (resp.~$\plat(w)$, $\des(w)$) be the number of ascents (resp.~plateaux, descents) of $w$. This paper is motivated by a partial $\gamma$-positivity conjecture of Ma, Ma and Yeh~\cite{mmy} concerning the study of these three statistics on the so-called Jacobi-Stirling permutations introduced in~\cite{glz}. 

Let us first give an overview of the Jacobi-Stirling permutations. The {\em classical Eulerian polynomials} $A_n(t)$ are defined by 
$$
\sum_{k\geq1}k^nt^k=\frac{A_n(t)}{(1-t)^{n+1}}. 
$$
It is well known~\cite{fsc0} that $A_n(t)$ can be interpreted as the descent polynomials over permutations:
$$
A_n(t)=\sum_{\pi\in\S_n}t^{\des(\pi)},
$$
where $\S_n$ is the set of all permutations of $[n]:=\{1,2,\ldots,n\}$. Recall that the {\em Stirling number of the second kind} $S(n,k)$ enumerates the set partitions of $[n]$ with $k$ blocks. In order to interpret the {\em second-order Eulerian polynomials} $C_n(t)$ appearing as
 $$
\sum_{k\geq0}S(n+k,k)t^k=\frac{C_n(t)}{(1-t)^{2n+1}}, 
$$
 Gessel and Stanley~\cite{gs} invented the Stirling permutations. A {\em Stirling permutation}  of order $n$ is a permutation of the multiset $\{1,1,2,2,\ldots,n,n\}$ such that for each $i\in[n]$, all entries between the two occurrences of $i$ are larger than $i$. Gessel and Stanley~\cite{gs} provided three different proofs for the interpretation 
 $$
 C_n(t)=\sum_{\pi\in\mathcal{Q}_n}t^{\des(\pi)},
 $$
 where $\mathcal{Q}_n$ denotes the set of Stirling permutations of order $n$. Interestingly,  the three statistics $\asc$, $\plat$ and $\des$ are equidistributed over $\Q_n$, as was shown by B\'ona in~\cite{bo} where the statistic $\plat$ was first considered. 
 
 The {\em Jacobi-Stirling numbers} $\JS(n,k;z)$, as a generalization of the Stirling number $S(n,k)$, were  introduced  in the study of a problem involving the spectral theory of powers of the classical second-order Jacobi differential expression (see~\cite{gewl}).    Write the {\em Jacobi-Stirling polynomial} $\JS(n+k,k;z)$ as $p_{n,0}(k)+p_{n,1}(k)z+\cdots+p_{n,n}(k)z^n$. Generalizing the above study for $S(n+k,k)$, Gessel, Lin and Zeng~\cite{glz} investigated the diagonal generating function 
 $$
 \sum_{k\geq0}p_{n,i}(k)t^k=\frac{A_{n,i}(t)}{(1-t)^{3n+1-i}}
 $$
 and showed that $A_{n,i}(t)$ is the descent polynomial over the Jacobi-Stirling permutations $\JSP_{n,i}$ defined in the same flavor as the Stirling permutations. Introduce the multiset
 $$M_n:=\{1,1,\bar{1},2,2,\bar{2},\ldots,n,n,\bar{n}\},$$
 where the elements are ordered by 
 $$
 \bar{1}<1<\bar{2}<2<\cdots<\bar{n}<n. 
 $$
 Let $[\bar{n}]:=\{\bar{1},\bar{2},\ldots,\bar{n}\}$. For any subset $S\subseteq[\bar{n}]$, we set $M_{n,S}=M_n\setminus S$. A permutation of $M_{n,S}$ is a {\em Jacobi-Stirling permutation} if for each $i\in[n]$, all entries between the two occurrences of $i$ are larger than $i$. We denote by $\JSP_{n,S}$ the set of all Jacobi-Stirling permutations of $M_{n,S}$ and set 
 $$
 \JSP_{n,i}=\bigcup_{S\subseteq[\bar{n}]\atop |S|=i}\JSP_{n,S}. 
 $$
 Note that $\JSP_{n,n}=\mathcal{Q}_n$.

Gamma-positive polynomials arise frequently in enumerative combinatorics and have recent motivation coming from geometry; see the survey of Athanasiadis~\cite{Ath}. 
A univariate  polynomial $f(x)$  is said to be {\em $\gamma$-positive} if it can be expanded as 
$$
f(x)=\sum_{k=0}^{\lfloor\frac{n}{2}\rfloor}\gamma_k x^k(1+x)^{n-2k}
$$
 with $\gamma_k\geq0$.
A bivariate polynomial  $h(x,y)$ is said to be {\em homogeneous $\gamma$-positive}, if $h(x,y)$ is homogeneous and $h(x,1)$ is {\em $\gamma$-positive}. This is equivalent to say that   $h(x,y)$ can be expressed as 
 $$
 h(x,y)=\sum_{k=0}^{\lfloor\frac{n}{2}\rfloor}\gamma_k(xy)^k(x+y)^{n-2k}
 $$
 with $\gamma_k\geq0$.
 One of the typical  examples due to Foata and Sch\"uzenberger~\cite{fsc0} is the {\em bivariate Eulerian polynomial} $A_n(x,y)=\sum_{\pi\in\S_n}x^{\asc(\pi)}y^{\des(\pi)}$. A trivariate  polynomial $p(x,y,z)=\sum_is_i(x,y)z^i$ is said to be {\em partial $\gamma$-positive} if every $s_i(x,y)$ is homogeneous $\gamma$-positive. 
Based on empirical evidence, Ma, Yeh and the second named author~\cite{mmy} proposed the following conjecture.
 
 \begin{conjecture}[Ma, Ma and Yeh~\cite{mmy}]\label{conj:mmy}
 Let
 $$
 \Jsp_{n,i}(x,y,z):=\sum_{\pi\in\JSP_{n,i}}x^{\asc(\pi)}y^{\des(\pi)}z^{\plat(\pi)}
 $$
 be a trivariate  extension of $A_{n,i}(y)$. For any $n\geq1$ and $0\leq i\leq n$, the polynomial  $\Jsp_{n,i}(x,y,z)$ is partial $\gamma$-positive. 
 \end{conjecture}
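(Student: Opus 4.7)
The strategy is to prove a common strengthening valid for all generalized Stirling permutations and obtain Conjecture~\ref{conj:mmy} as a specialization. Two complementary devices will interact: the formal machinery of context-free grammars, which yields the $\gamma$-expansion algebraically, and a Foata--Strehl-type group action on generalized Stirling permutations, which supplies the combinatorial interpretation of the $\gamma$-coefficients.

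First I would design a context-free grammar $G$ on a small alphabet such that iterating the formal derivative $D_G$ produces a refinement of $\Jsp_{n,i}(x,y,z)$ at step $n$. After a Chen--Fu-style change of variables, $G$ can be rewritten as a ``$\gamma$-grammar'' $G'$ on the generators $u=xy$, $v=x+y$ and $z$. The main computation is to verify that every monomial appearing in $D_{G'}^n$ is of the form $u^j v^{N-2j} z^k$ (for the appropriate total degree $N$) with a non-negative coefficient. Collecting like powers of $z$ then displays each coefficient $[z^k]\Jsp_{n,i}(x,y,z)$ as a non-negative combination of homogeneous $\gamma$-monomials.

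Second, in parallel I would construct a modified Foata--Strehl (mFS) action on generalized Stirling permutations. For each letter, one defines a ``flip'' that exchanges its role between certain ascent/descent positions while preserving both the plateau count and the nesting condition. Orbits of the resulting elementary abelian $2$-group would have constant $\plat$ and be symmetric in $\asc \leftrightarrow \des$, so each orbit would contribute exactly one monomial of the form $(xy)^j(x+y)^{N-2j}z^k$ to $\Jsp_{n,i}(x,y,z)$. Selecting the canonical orbit representative with no ``free ascent'' matches the grammar output and gives the promised combinatorial interpretation of the $\gamma$-coefficients.

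The main obstacle I anticipate is ensuring the flip respects both the nesting condition and the subset $S\subseteq[\bar n]$ of removed barred letters. In ordinary Stirling permutations each value $x$ delimits a contiguous subword (a subtree in the associated plane tree), so a flip is easy to make legal; in the Jacobi-Stirling setting removing a barred letter can sever such subwords, so a careful notion of ``free position'' relative to $S$ must be isolated. A natural route is first to treat the case $S=\emptyset$ (full multiset $M_n$), verify compatibility with the grammar, and then show that the action descends to each $M_{n,S}$ by treating the deletions as local surgeries preserving orbit structure. The analytic stability argument promised in the abstract would provide an independent certificate of partial $\gamma$-positivity, a useful cross-check on the $\gamma$-expansion produced above.
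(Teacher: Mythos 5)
Your high-level skeleton matches the paper's: prove a stronger statement for generalized Stirling permutations $\Q_{\m}$ of an arbitrary multiset, then recover the conjecture by writing $\Jsp_{n,i}(x,y,z)=\sum_{|S|=i}\Jsp_{n,S}(x,y,z)$ with each $\Jsp_{n,S}=S_{\m(S)}$. In particular, the obstacle you flag as the main one (compatibility with the removed barred letters $S\subseteq[\bar n]$) is actually a non-issue: in the multiset formulation a removed barred letter is simply a letter of multiplicity one, and all summands for fixed $i$ have the same homogeneous degree $3n-i+1-k$ in each $z^k$-coefficient, so partial $\gamma$-positivity survives the summation with no ``local surgery'' needed.

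The genuine gap is in the heart of your plan: you posit a Foata--Strehl-type action whose orbits have \emph{constant} $\plat$ and are symmetric under $\asc\leftrightarrow\des$, so that each orbit contributes $(xy)^j(x+y)^{N-2j}z^k$. No such action is constructed, and the natural flips (moving an occurrence of a letter between an ascent-type and a descent-type position) do \emph{not} preserve $\plat$: moving a letter next to or away from its other copies creates or destroys plateaux. For instance, in $\Q_{(2,2)}$ the only way to pair $1221$ (weight $x^2y^2z$) with anything forces moving the leading $1$, giving $2211$, which changes $\plat$ from $1$ to $2$. The paper's key idea, which your proposal is missing, is precisely to sidestep this: it first proves (Lemma~\ref{lem:plat}, via a grammatical labeling and the symmetry of the grammars $G_k$ in the labels $\tilde x,\tilde y$) that $(\des,\plat,\asc)$ is equidistributed on $\Q_{\m}$ with $(\fplat+\sdes,\mdup,\asc)$, and only then runs the group action on the \emph{new} triple, whose invariant is $\mdup$ (multiple descents plus unmovable plateaux), not $\plat$. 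This also means your use of grammars is wired differently from the paper's: you want the grammar, after a change of variables to $u=xy$, $v=x+y$, to output the $\gamma$-expansion directly, but there is no indication such a ``$\gamma$-grammar'' exists for the triple involving $\plat$ itself; in the paper the grammar's only job is to prove the equidistribution that trades $\plat$ for $\mdup$. Without that trade, both halves of your plan stall at the same point.
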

 
 This conjecture has already been confirmed by Ma et al.~\cite{mmy} in the special $i=0$ and $i=n$ cases. As a main contribution of this paper, we will prove a generalization of Conjecture~\ref{conj:mmy} for the Stirling permutations of  a fixed multiset. We need further definitions before we can state our main result. 

For each vector $\m=(m_1,m_2,\ldots,m_n)\in\mathbb{P}^n$, denote by $M_{\m}$ the general multiset $\{1^{m_1},2^{m_2},\ldots,n^{m_n}\}$, where $i$ appears $m_i$ times. A permutation of $M_{\m}$ is a {\em generalized Stirling permutation} if all entries between any two  occurrences of $i$ are larger than $i$ for each $i\in[n]$. Let $\mathcal{Q}_{\m}$ be the set of all  generalized Stirling permutations of $M_{\m}$.    Note that $\mathcal{Q}_{\m}=\mathcal{Q}_n$ when $\m=(2,2,\ldots,2)$. The generalized Stirling permutations and various statistics over them have been studied in~\cite{br,hm,jkp}. In particular, Brenti~\cite{br} showed that the descent polynomial over $\mathcal{Q}_{\m}$ has only real roots for each $\m\in\mathbb{P}^n$. We will consider more statistics on  $\Q_{\m}$. 

\begin{Def}[Statistics on $\Q_{\m}$]
Let $\pi=\pi_1\pi_2\cdots\pi_m\in\Q_{\m}$, where $m=\sum_{i=1}^{n}m_i$. As usual, we set $\pi_0=\pi_{m+1}=0$. A letter $k\in[n]$ is said to be {\em multiple} if $m_k>1$; and {\em single}, otherwise. An index $i\in[m]$ is called a {\em multiple} (resp.~{\em single}) {\em descent} of $\pi$ if $\pi_i>\pi_{i+1}$ and $\pi_i$ is multiple (resp.~single).  A {\em double-ascent} (resp.~{\em double-descent, peak, ascent-plateau, descent-plateau}) of $\pi$ is an index $i\in[m]$ such that $\pi_{i-1}<\pi_i<\pi_{i+1}$ (resp.~$\pi_{i-1}>\pi_i>\pi_{i+1}$, $\pi_{i-1}<\pi_i>\pi_{i+1}$, $\pi_{i-1}<\pi_i=\pi_{i+1}$, $\pi_{i-1}>\pi_i=\pi_{i+1}$). 
It is clear that if $i$ is a peak, then $\pi_i$ must be single. We further distinguish a double-descent $i$ to be single or multiple according to $\pi_i$ is single or multiple. A descent-plateau $i$ is {\em free} if there does not exist an integer $\ell$, $1\leq \ell<i$, such that $\pi_{\ell}=\pi_i$. A plateau $i$ of  $\pi$ is said to be {\em unmovable} if $i$ is neither a free descent-plateau nor an ascent-plateau. 
Let us introduce the statistics of $\pi$ by 
\begin{itemize}
\item $\dasc(\pi)=\#\{i\in[m]: \pi_{i-1}<\pi_i<\pi_{i+1}\}$, the number of {\bf d}ouble-{\bf asc}ents of $\pi$;
\item $\sddes(\pi):=\#\{i\in[m]: \pi_{i-1}>\pi_i>\pi_{i+1}\text{ and $\pi_i$ is single}\}$, the number of {\bf s}ingle {\bf d}ouble-{\bf des}cents of $\pi$;
\item $\fdesp(\pi):=\#\{i\in[m]: \pi_{i-1}>\pi_i=\pi_{i+1}\text{ and $\pi_{\ell}\neq\pi_i$ for all $\ell\in[i-1]$}\}$, the number of {\bf f}ree {\bf des}cent-{\bf p}lateaus of $\pi$;
\item $\ascpp(\pi):=\#\{i\in[m]: \pi_{i-1}<\pi_i=\pi_{i+1}\text{ or $\pi_{i-1}<\pi_i>\pi_{i+1}$}\}$, the number of {\bf asc}ent-{\bf p}lateaus and {\bf p}eaks. 
\item $\mdup(\pi):=\#\{i\in[m]: \text{ $i$ is a {\bf m}ultiple {\bf d}escent or a {\bf u}nmovable {\bf p}lateau}\}$.
\end{itemize}
For example, if $\pi=15565333124411$, then $\dasc(\pi)=2$, $\sddes(\pi)=0$, $\fdesp(\pi)=1$, $\ascpp(\pi)=4$ and $\mdup(\pi)=4$.
\end{Def}

Now we are ready to state our main result. 
\begin{theorem}\label{thm:stir}
For any $\m\in\mathbb{P}^n$ with $m_1+m_2+\cdots m_n=m$, let 
$$
S_{\m}(x,y,z)=\sum_{\pi\in\Q_{\m}}x^{\asc(\pi)}y^{\des(\pi)}z^{\plat(\pi)}.
%=\sum_{i}S_{\m,i}(x,y)z^i. 
$$
The polynomial $S_{\m}(x,y,z)$ is partial $\gamma$-positive and  has the expansion
$$
S_{\m}(x,y,z)=\sum_{i=0}^{m-1}z^i\sum_{j=1}^{\lfloor\frac{m+1-i}{2}\rfloor}\gamma_{\m,i,j}(xy)^j(x+y)^{m+1-i-2j},
$$
where 
\begin{equation}\label{gam:stir}
\gamma_{\m,i,j}=\#\{\pi\in\Q_{\m}: \sddes(\pi)=\fdesp(\pi)=0, \mdup(\pi)=i,\ascpp(\pi)=j\}. 
\end{equation}
\end{theorem}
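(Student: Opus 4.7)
The plan is to prove Theorem~\ref{thm:stir} via a Foata-Strehl-type group action on $\Q_{\m}$ combined with a context-free grammar that verifies the underlying polynomial identity. The action should partition $\Q_{\m}$ into orbits on which $\plat$, $\mdup$, and $\ascpp$ are constant, with each orbit containing a unique element satisfying $\sddes=\fdesp=0$; these canonical representatives are precisely the objects enumerated in~\eqref{gam:stir}.

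The first step is to define, for each letter position $p$ of $\pi\in\Q_{\m}$ that is either a single double-ascent, a single double-descent, or the letter position of a free descent-plateau, an involution $\phi_p$ that relocates $\pi_p$ (or the leading block of $\pi_p$, in the free-plateau case) between the two extremal admissible positions inside the maximal region bounded by strictly larger letters. I then would verify (i) that $\phi_p$ preserves the Stirling property, because the surrounding region contains only letters $>\pi_p$; (ii) that $\phi_p$ swaps single double-ascents with single double-descents and free descent-plateaus with ascent-plateaus; (iii) that distinct $\phi_p$'s commute, each acting inside a bounded region whose boundary letters are untouched by the others. These involutions generate an abelian $2$-group action on $\Q_{\m}$. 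One checks that this action preserves $\plat$, $\mdup$, and $\ascpp$, forcing these statistics to be orbit-invariant, and that the canonical element with $\sddes=\fdesp=0$ is recovered by applying $\phi_p$ at every single double-descent and every free descent-plateau. Within an orbit indexed by $(i,j)=(\mdup,\ascpp)$, the $i$ plateau gaps supply $z^i$, the $j$ anchor positions at peaks and ascent-plateaus contribute a fixed $(xy)^j$, and the remaining $m+1-i-2j$ flippable gaps generate $(x+y)^{m+1-i-2j}$; summing over orbits yields the desired expansion.

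The hardest step will be the block-hopping involution on free descent-plateaus: one must describe explicitly how to relocate the entire first block of a multiple letter $v$ without violating the Stirling property, and show that this move commutes with the single-letter involutions on letters nested inside it. I plan to codify the move using the block-decomposition $\pi=\alpha_0 B_1\alpha_1 B_2\cdots\alpha_{k-1}B_k\alpha_k$ that writes the occurrences of $v$ as maximal contiguous $v$-blocks $B_\ell$ separated by words $\alpha_i$ of letters $>v$, after which commutativity reduces to a routine case analysis on the nesting structure. In parallel, a context-free grammar simultaneously encoding $\asc,\des,\plat,\sddes,\fdesp,\mdup,\ascpp$ would give an independent inductive verification of the polynomial identity and also underlie the alternative stability-based proof advertised in the abstract.
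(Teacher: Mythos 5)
Your overall template---a commuting family of involutions generating a $\Z_2^n$-action whose orbits each contain a unique representative with $\sddes=\fdesp=0$---is indeed the paper's strategy, and you identify the correct set of canonical representatives. But your accounting of the orbit contributions is internally inconsistent, and the inconsistency points at the one idea you are missing. You require the action to preserve $\plat$ and, simultaneously, that the orbit of a canonical representative contribute $z^i(xy)^j(x+y)^{m+1-i-2j}$ with $i=\plat$ and $j=\ascpp$. These demands are incompatible: an ascent-plateau \emph{is} a plateau, so the gaps counted by $j$ overlap those counted by $i$, and $i+2j$ can exceed the total number $m+1$ of gaps. Concretely, for $\m=(2,2)$ the permutation $1122\in\Q_{\m}$ has $\plat=2$, $\ascpp=2$, $m+1=5$, so your formula would force the exponent $(x+y)^{-1}$. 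The theorem's expansion is consistent precisely because the index $i$ in~\eqref{gam:stir} is $\mdup$ (multiple descents plus \emph{unmovable} plateaux), not $\plat$: for $1122$ one has $\mdup=1$ and $1+2\cdot 2=5=m+1$. Trading $(\des,\plat)$ for $(\fplat+\sdes,\mdup)$ is exactly the content of the paper's Lemma~\ref{lem:plat}, proved by a context-free grammar with a grammatical labeling (using that the grammar is symmetric in the labels recording $\mdes$ and $\fplat$). Only after this exchange does the group action enter: the paper's action moves \emph{single letters} (leftmost occurrences), preserves $\mdup$ and $\ascpp$, and emphatically does \emph{not} preserve $\plat$ (e.g.\ $\varphi_1(1221)=2211$ changes $\plat$ from $1$ to $2$). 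In your proposal the grammar is demoted to an ``independent inductive verification''; in fact it is the indispensable first half of the proof, without which the quantity your orbits attach to $z$ is not the statistic appearing in~\eqref{gam:stir}.

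Your attempted repair---moving the whole leading block of a letter in the free-descent-plateau case so as to keep $\plat$ fixed---also fails on its own terms, because it is not an involution. Moving a leading block rightward can merge it with a later block of the same letter, after which the inverse move displaces the wrong number of letters: for $\m=(3,2)$, the block $11$ in $11221$ moves right to give $22111$, whose leading block of $1$'s is $111$; moving that block back yields $11122\neq 11221$. (The paper's single-letter moves avoid exactly this ambiguity: only the leftmost occurrence travels, so the plateau it creates or destroys is unambiguous and the move is involutive.) Finally, even if one engineered a genuinely $\plat$-preserving involutive action, its orbit decomposition would produce a $\gamma$-expansion whose coefficients count canonical representatives sliced by $\plat$ rather than by $\mdup$---in the example above, $1122$ would be attached to the $z^2$ slice, whereas Theorem~\ref{thm:stir} places it in the $z^1$ slice with $\gamma_{\m,1,2}$---so recovering the stated interpretation of $\gamma_{\m,i,j}$ in terms of $(\mdup,\ascpp)$ would still require an equidistribution argument equivalent to Lemma~\ref{lem:plat}.
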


\begin{remark}
To see that Theorem~\ref{thm:stir} implies Conjecture~\ref{conj:mmy}, for any $S\subseteq[n]$ with $|S|=i$, define $\m(S)=(m_1,\ldots,m_{2n-i})$ where 
$$
m_{\ell}=
\begin{cases}
2, \quad \text{if $\ell=p+|\{a\in[n]\setminus S:a\leq p\}|$ for some $1\leq p\leq n$};\\
1,\quad\text{otherwise}.
\end{cases}
$$
For instance, if $S=\{1,2,5,7\}\subseteq[7]$, then $\m(S)=(2,2,1,2,1,2,2,1,2,2)$. Let 
$$\Jsp_{n,S}(x,y,z):=\sum_{\pi\in\JSP_{n,S}}x^{\asc(\pi)}y^{\des(\pi)}z^{\plat(\pi)}.$$ It is routine to check that $\Jsp_{n,S}(x,y,z)=S_{\m}(x,y,z)$ with $\m=\m(S)$. Therefore, by Theorem~\ref{thm:stir}, $\Jsp_{n,S}(x,y,z)$ is partial $\gamma$-positive, namely, 
$$
\Jsp_{n,S}(x,y,z)=\sum_{k=0}^{3n-i-1}z^k\sum_{j=1}^{\lfloor\frac{3n-i+1-k}{2}\rfloor}\gamma_{\m,k,j}(xy)^j(x+y)^{3n-i+1-k-2j}.
$$
Conjecture~\ref{conj:mmy} then follows from $\Jsp_{n,i}(x,y,z)=\sum_{S\in[n]\atop {|S|=i}}\Jsp_{n,S}(x,y,z)$. 
\end{remark}

The rest of this paper is organized as follows. In Section~\ref{sec:main}, we provide a  proof of Theorem~\ref{thm:stir} by using the context-free grammars and a generalization of the Foata--Strehl action on Stirling permutations. 
In Section~\ref{sec:stab},
an analytic approach to the partial $\gamma$-positivity of $S_{\m}(x,y,z)$ is studied by using the stable theory of multivariate polynomials developed by Bocrea and Br\"and\'en. 
As a result, the descent polynomials over Stirling permutations with fixed number of plateaux  are shown to be real-rooted. 

\section{Proof of Theorem~\ref{thm:stir}}\label{sec:main}
\subsection{Context-free grammars and an equidistribution}
For a set $V=\{x,y,z,\ldots\}$ of commutative variables, a {context-free grammar} $G$ is a set of substitution rules that replace a variable in $V$ by a Laurent polynomial of variables in $V$. The formal derivative $D$ associated with a context-free grammar $G$ (introduced by Chen in~\cite{chen}) is defined by $D(x)=G(x)$ for any $x\in V$ and satisfies the following relations:
\begin{align*}
D(u+v)&=D(u)+D(v),\\
D(uv)&=D(u)v+uD(v),
\end{align*}
where $u$ and $v$ are two Laurent polynomials of variables in $V$.  The context-free grammars have been found useful in studying various combinatorial structures~\cite{chen,chy, cy,dum,cf,mmy,ma2}, including permutations, increasing trees, labeled rooted trees and set partitions. For example, if $V=\{x,y\}$ and $G=\{x\rightarrow xy, y\rightarrow xy\}$, then $D(x)=xy$, $D^2(x)=xy(x+y)$ and $D^3(x)=D(xy)(x+y)+D(x+y)xy=x^3y+4x^2y^2+xy^3$. This is the grammar introduced by Dumont~\cite{dum} to generate the bivariate  Eulerian polynomials $D^n(x)=A_n(x,y)$.

Let $\pi=\pi_1\pi_2\cdots\pi_m\in\Q_{\m}$, where $m=\sum_{i=1}^{n}m_i$.  A plateau $i$ of $\pi$ is called a {\em first plateau} if $\pi_j\neq\pi_i$ for all $1\leq j<i$.  Let $\fplat(\pi)$ be the number of  {\bf f}irst {\bf plat}eau of $\pi$.
Denote by $\sdes(\pi)$ (resp.~$\mdes(\pi)$) the number of {\bf s}ingle (resp.~{\bf m}ultiple) {\bf des}cents of $\pi$.
We will apply the context-free grammars to prove the following equidistribution. 

\begin{lemma}\label{lem:plat}
For any $\m\in\mathbb{P}^n$ with $m_1+m_2+\cdots+m_n=m$, the following two  triplets of  statistics are equidistributed on the Stirling permutations $\Q_{\m}$:
$$
(\des,\plat,\asc)\quad\text{and}\quad(\fplat+\sdes,\mdup,\asc).
$$
\end{lemma}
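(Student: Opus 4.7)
Let
\[
P_{\m}(x,y,z):=\sum_{\pi\in\Q_\m}x^{\asc(\pi)}y^{\des(\pi)}z^{\plat(\pi)}\quad\text{and}\quad Q_{\m}(x,y,z):=\sum_{\pi\in\Q_\m}x^{\asc(\pi)}y^{\fplat(\pi)+\sdes(\pi)}z^{\mdup(\pi)}.
\]
The plan is to exhibit a parametric family of formal derivatives $\{D_k\}_{k\geq 1}$ such that both $P_{\m}$ and $Q_{\m}$ satisfy $P_{\m}=D_{m_n}(P_{\m'})$ and $Q_{\m}=D_{m_n}(Q_{\m'})$, where $\m'=(m_1,\ldots,m_{n-1})$, with the same initial value on $\Q_{(m_1)}$. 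The equidistribution $P_{\m}=Q_{\m}$ then follows by induction on $n$.

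The key structural fact is that, since $n$ is the largest letter, the Stirling condition forces its $k:=m_n$ copies to appear consecutively; hence every $\pi\in\Q_\m$ is obtained by inserting the block $n^k$ into a unique $\pi'\in\Q_{\m'}$ at one of the $|\pi'|+1$ gaps. For $P_{\m}$, a short case analysis---according as the chosen gap is an ascent, descent, or plateau of $\pi'$---shows that the insertion adds one new ascent, $k-1$ new plateaux, and one new descent while destroying the statistic at the chosen gap, so that the weight of $\pi'$ is multiplied by $yz^{k-1}$, $xz^{k-1}$, or $xyz^{k-2}$ respectively. These are exactly the three multipliers produced by the formal derivative $D_k$ associated with the grammar
\[
G_k:=\{x\to xyz^{k-1},\ y\to xyz^{k-1},\ z\to xyz^{k-1}\}.
\]

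The hard part will be to show that $Q_{\m}$ satisfies the same recursion $Q_{\m}=D_k(Q_{\m'})$. The strategy is to partition the gaps of $\pi'$ into three classes whose sizes match the exponents of $x$, $y$ and $z$ in the weight of $\pi'$ under the second triplet: the ascents (of size $\asc(\pi')$); the ascent-plateaux, free descent-plateaux, and single descents (of size $\fplat(\pi')+\sdes(\pi')$); and the double plateaux, non-free descent-plateaux, and multiple descents (of size $\mdup(\pi')$). This partition hinges on two easy facts: every ascent-plateau of a Stirling permutation is automatically a first plateau (otherwise the Stirling property would force the left neighbour to exceed $\pi_i$, contradicting the ascent), and every first descent-plateau is by definition free. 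A case-by-case check---using crucially that $n$ is a new letter, so that the single/multiple status of descents and the first/unmovable status of plateaux of $\pi'$ persist in $\pi$ away from the destroyed gap---yields weight multipliers $yz^{k-1}$, $xz^{k-1}$, and $xyz^{k-2}$ for the three classes, matching $D_k$ applied to $x^{\asc(\pi')}y^{\fplat(\pi')+\sdes(\pi')}z^{\mdup(\pi')}$. The bookkeeping balances because the inserted block always contributes one new first plateau at its left end (when $k\geq 2$), $k-2$ new double plateaux (each unmovable), and one new descent at its right end whose type (single when $k=1$, multiple when $k\geq 2$) together with these plateaux account for the $\Delta\mdup$ and $\Delta(\fplat+\sdes)$ increments predicted by $D_k$.

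The initial case $\Q_{(m_1)}=\{1^{m_1}\}$ is immediate: its unique element has $(\asc,\des,\plat)=(1,1,m_1-1)$, and one checks in the two cases $m_1=1$ and $m_1\geq 2$ that $\fplat+\sdes=1$ and $\mdup=m_1-1$ as well. Hence $P_{(m_1)}=Q_{(m_1)}=xyz^{m_1-1}$, and induction via the common grammar recursion forces $P_{\m}=Q_{\m}$ for every $\m$, which is the asserted equidistribution.
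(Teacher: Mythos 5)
Your proposal is correct, and it runs on the same engine as the paper's proof: peel off the necessarily consecutive block $n^{m_n}$, encode its reinsertion as the formal derivative of a context-free grammar, and use the fact that the occurrence-based refinements (single vs.\ multiple descents, first vs.\ unmovable plateaux) are preserved at every gap except the destroyed one. The difference is one of packaging. The paper runs the insertion analysis once, with a five-letter labeling, to prove
$$
D_{m_n}\cdots D_{m_1}(z)=\sum_{\pi\in\Q_{\m}}x^{\sdes(\pi)}\,\tilde{x}^{\mdes(\pi)}\,\tilde{y}^{\fplat(\pi)}\,y^{\uplat(\pi)}\,z^{\asc(\pi)},
$$
and then concludes in one line because every rule of its grammar is symmetric in $\tilde{x}$ and $\tilde{y}$: the quintuples $(\sdes,\mdes,\fplat,\uplat,\asc)$ and $(\sdes,\fplat,\mdes,\uplat,\asc)$ are equidistributed, which is strictly stronger than the lemma. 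You instead specialize to the two coarser triples from the outset and check that $P_{\m}$ and $Q_{\m}$ satisfy one and the same three-variable recursion $G_k=\{x\to xyz^{k-1},\,y\to xyz^{k-1},\,z\to xyz^{k-1}\}$ with equal initial values; your two recursions are precisely the images of the paper's single grammar under two substitutions differing by the swap $\tilde{x}\leftrightarrow\tilde{y}$, so the content is the same, but the gap-by-gap case analysis must be done twice --- your class-2/class-3 bookkeeping for $Q_{\m}$ is exactly the work that the paper's symmetry argument replaces. What your version buys is economy (three variables, no auxiliary labeling, a clean ``two sequences satisfy the same recursion'' induction); what the paper's buys is the finer five-statistic equidistribution at no extra cost. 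Your supporting facts (ascent-plateaux of Stirling permutations are first plateaux; free descent-plateaux are exactly the first descent-plateaux) and your multipliers $yz^{k-1}$, $xz^{k-1}$, $xyz^{k-2}$, including the $k=1$ versus $k\geq 2$ split for the type of the new descent, are all accurate.
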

\begin{proof}
Note that $i$ is a first plateau if and only if $i$ is either a  free descent-plateau or an ascent-plateau. In other words, a plateau is unmovable if and only if it is not a first plateau. Thus, $\plat(\pi)-\fplat(\pi)=\uplat(\pi)$, the number of {\bf u}nmovable {\bf plat}eau of $\pi$. We aim to show that the quintuplets
\begin{equation}\label{equi:quin}
(\sdes,\mdes,\fplat,\uplat,\asc)\quad\text{and}\quad(\sdes,\fplat,\mdes,\uplat,\asc)
\end{equation}
are equidistributed on $\Q_{\m}$, from which the lemma follows. 

 For a Stirling permutation $\pi\in\Q_{\m}$, we first introduce a grammatical labeling of $\pi$ as follows:
\begin{itemize}
\item[($L_1$)] If $i$ is a single descent, then put a superscript label $x$ right after $\pi_i$; 
\item[($L_2$)] If $i$ is a multiple descent, then put a superscript label $\tilde{x}$ right after $\pi_i$; 
\item[($L_3$)] If $i$ is a first plateau, then put a superscript label $\tilde{y}$ right after $\pi_i$; 
\item[($L_4$)] If $i$ is a unmovable plateau, then put a superscript label $y$ right after $\pi_i$; 
\item[($L_5$)] If $i$ is an ascent, then put a superscript label $z$ right after $\pi_i$. 
\end{itemize}
Recall that we always set $\pi_0=\pi_{m+1}=0$. For example, if $\pi=15565333124411$, then the labeling of $\pi$ is 
$$
0{}^{z}1^z5^{\tilde{y}}5^z6^x5^{\tilde{x}}3^{\tilde{y}}3^y3^{\tilde{x}}1^z2^z4^{\tilde{y}}4^{\tilde{x}}1^y1^{\tilde{x}}0.
$$
It is clear that the weight $x^{\sdes(\pi)}\tilde{x}^{\mdes(\pi)}\tilde{y}^{\fplat(\pi)}y^{\uplat(\pi)} z^{\asc(\pi)}$ is the product of all the superscripts in the labelings of $\pi$. 
Let $V=\{x,\tilde{x},y,\tilde{y},z\}$. For integer $k\geq2$, introduce the context-free grammar 
$$
G_k=\{x\rightarrow\tilde{x}\tilde{y}y^{k-2}z, \tilde{x}\rightarrow\tilde{x}\tilde{y}y^{k-2}z, y\rightarrow\tilde{x}\tilde{y}y^{k-2}z, \tilde{y}\rightarrow\tilde{x}\tilde{y}y^{k-2}z,z\rightarrow\tilde{x}\tilde{y}y^{k-2}z\}.
$$
Also, define the context-free grammar 
$$
G_1=\{x\rightarrow xz, \tilde{x}\rightarrow xz, y\rightarrow xz, \tilde{y}\rightarrow xz,z\rightarrow xz\}.
$$
For any $k\geq1$, let $D_k$ be the formal derivative associated with the context-free grammar $G_k$.  
We claim that 
$$
D_{m_n}D_{m_{n-1}}\cdots D_{m_1}(z)=\sum_{\pi\in\Q_{\m}}x^{\sdes(\pi)}\tilde{x}^{\mdes(\pi)}\tilde{y}^{\fplat(\pi)}y^{\uplat(\pi)} z^{\asc(\pi)}.
$$
The equidistribution~\eqref{equi:quin} then follows from this claim and the fact that the context-free grammars $G_k$ are symmetric in $\tilde{x}$ and $\tilde{y}$ for all $k\geq1$. 

It remains to show the claim. We proceed by induction on $n$. The statement is obviously true for the initial case $n=0$, as $\Q_0=\{0^z0\}$. Suppose that we have all labeled permutations in $\Q_{\m'}$, where $\m'=(m_1,m_2,\ldots,m_{n-1})$. Note that very permutation in $\Q_{\m}$ can be constructed from a permutation $\sigma\in\Q_{\m'}$ by inserting $n^{m_n}$, $m_n$ copies of $n$, to the position between $\sigma_i$ and $\sigma_{i+1}$ for some nonnegative integer $i$. 
The changes of labelings are illustrated as follows for any $v\in\{x,\tilde{x},y,\tilde{y},z\}$ (no matter what the label $v$ is): 
\begin{itemize}
\item If $m_n\geq2$, then
$$
\cdots\sigma_i^v\sigma_{i+1}\cdots\mapsto \cdots\sigma_i^zn^{\tilde{y}}n^y\cdots n^yn^{\tilde{x}}\sigma_{i+1}\cdots,
$$
where $n^y$ appears $(m_n-2)$ times. 
\item Otherwise $m_n=1$, and 
$$
\cdots\sigma_i^v\sigma_{i+1}\cdots\mapsto \cdots\sigma_i^zn^x\sigma_{i+1}\cdots.
$$
\end{itemize}
In either case, the insertion of $n^{m_n}$ corresponds to one substitution rule in $G_{m_n}$. Since the action of $D_{m_n}$ on elements of $\Q_{\m'}$ generates all elements of $\Q_{\m}$, the claim holds. This completes the proof of the lemma. 
\end{proof}

In order to finish the proof of Theorem~\ref{thm:stir}, we need  a generalization of the so-called Foata--Strehl group action on Stirling permutations that will be introduced below. 

\subsection{Generalized Foata--Strehl actions on Stirling permutations}

The Foata--Strehl group action on permutations was invented by Foata and Strehl~\cite{fst} (see also~\cite{swg}) to prove combinatorially the homogeneous  $\gamma$-positivity expansion 
$$
A_n(x,y)=\sum_{k=1}^{\lfloor\frac{n+1}{2}\rfloor}\gamma_{n,k}(xy)^k(x+y)^{n+1-2k},
$$
where $\gamma_{n,k}$ enumerates the  permutations in $\S_n$ with $k$ descents and with no double descents. 
Since then some generalizations and analogues  of this  $\gamma$-positivity expansion, with or without combinatorial proofs, have been found~\cite{bra,flz,lk,lin,lz}. In particular, although the $\gamma$-coefficients of the {\em double Eulerian polynomials} were known~\cite{lin} to be nonnegative, to find a combinatorial interpretation of them is still widely open. Here we develop a generalization of the Foata--Strehl action for Stirling permutations to get the combinatorial interpretation of $\gamma_{\m,i,j}$ in~\eqref{gam:stir}, finalizing the proof of  Theorem~\ref{thm:stir}.

For a Stirling permutation $\pi\in\Q_{\m}$ and a value  $x\in[n]$, suppose that $\pi_{\ell}$ is the leftmost occurrence of $x$ in $\pi$. We call $x$ a {\em free descent-plateau value} (resp.~a {\em single double descents value}, a  {\em double ascents value}) of $\pi$ if  $\ell$ is a free descent plateau (resp.~a single double descents, a double ascents) of $\pi$.  
Then we introduce the {\em Generalized Foata-Strehl action} (GFS-action for short) $\varphi_x$ as follows: 
\begin{itemize}
\item  If $x$ is a free descent-plateau value or a single double descents value of $\pi$, then $\varphi_x(\pi)$ is obtained from $\pi$ by moving   $\pi_{\ell}$ to the right of the letter $\pi_k$, where $k=\max\{a: 0\leq a\leq \ell-2,\pi_a<x\}$; 
\item If $x$ is a double ascents value of $\pi$, then  $\varphi_x(\pi)$ is obtained from $\pi$ by moving  $\pi_{\ell}$ to the left of the letter $\pi_k$, where $k=\min\{a: \ell+2\leq a\leq m+1,\pi_a\leq x\}$;
\item If $x$ is not in the above two cases, then let $\varphi_x(\pi)=\pi$.
\end{itemize}
The  GFS-action has a nice visualization as depicted in Fig.~\ref{GFS-action}. For example, if $\pi=15565333124411$, then $\varphi_1(\pi)=5565333{\bf1}124411$, $\varphi_3(\pi)=1{\bf3}556533124411$ and $\varphi_{2}(\pi)=15565333144{\bf2}11$. When $m_1=m_2=\cdots=m_n=1$, the GFS-action becomes the version of {\em Modified Foata-Strehl action} introduced in~\cite{lk}.

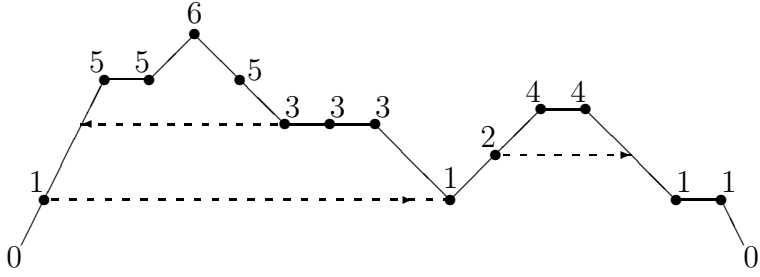
\begin{figure}
\setlength{\unitlength}{1mm}
\begin{picture}(108,38)\setlength{\unitlength}{1mm}
\thinlines
%\put(12,15){\dashline{1}(1,0)(40,0)}%3
%\put(52,15){\vector(1,0){0.1}}
%
%\put(16,19){\dashline{1}(1,0)(32,0)}%4
%\put(48,19){\vector(1,0){0.1}}
%
%\put(40,27){\dashline{1}(-1,0)(-16,0)}%6
%\put(24,27){\vector(-1,0){0.1}}
%
%\put(68,23){\dashline{1}(1,0)(16,0)}%5
%\put(84,23){\vector(1,0){0.1}}
%

\put(9,9){\dashline{1}(1,0)(54,0)}
\put(58,9){\vector(1,0){0.1}}

\put(41,19){\dashline{1}(-1,0)(-27,0)}
\put(14,19){\vector(-1,0){0.1}}

\put(69,15){\dashline{1}(1,0)(18,0)}
\put(87,15){\vector(1,0){0.1}}

%%%%%%%%%%%%%%
\put(6,3){\line(1,2){11}}\put(4,0){$0$}
\put(9,9){\circle*{1.3}}\put(7,10){$1$}
\put(17,25){\circle*{1.3}}\put(17,25){\line(1,0){6}}\put(15,26){$5$}
\put(23,25){\circle*{1.3}}\put(23,25){\line(1,1){6}}\put(21,26){$5$}
\put(29,31){\circle*{1.3}}\put(29,31){\line(1,-1){12}}\put(28,32.5){$6$}
\put(35,25){\circle*{1.3}}\put(36,25){$5$}
\put(41,19){\circle*{1.3}}\put(41,19){\line(1,0){12}}\put(41,20){$3$}
\put(47,19){\circle*{1.3}}\put(47,20){$3$}
\put(53,19){\circle*{1.3}}\put(53,19){\line(1,-1){10}}\put(53,20){$3$}
\put(63,9){\circle*{1.3}}\put(63,9){\line(1,1){12}}\put(62,10.5){$1$}
\put(69,15){\circle*{1.3}}\put(67,16){$2$}
\put(75,21){\circle*{1.3}}\put(75,21){\line(1,0){6}}\put(73,22){$4$}
\put(81,21){\circle*{1.3}}\put(81,21){\line(1,-1){12}}\put(79,22){$4$}
\put(93,9){\circle*{1.3}}\put(93,9){\line(1,0){6}}\put(93,10){$1$}
\put(99,9){\circle*{1.3}}\put(99,10){$1$}
\put(99,9){\line(1,-2){3}}\put(102,0){$0$}

\end{picture}
\caption{Generalized Foata--Strehl actions on $15565333124411$}
\label{GFS-action}
\end{figure}

We are now ready for the proof of Theorem~\ref{thm:stir}. 

\begin{proof}[{\bf Proof of Theorem~\ref{thm:stir}}]

By Lemma~\ref{lem:plat}, we have
$$
S_{\m}(x,y,z)=\sum_{\pi\in\Q_{\m}}x^{\asc(\pi)}y^{\fplat(\pi)+\sdes(\pi)}z^{\mdup(\pi)}.
$$
Let us define the set $\Q_{\m,i}=\{\pi\in\Q_{\m}: \mdup(\pi)=i\}$. Then Theorem~\ref{thm:stir} is equivalent to 
\begin{equation}\label{stir:gam}
\sum_{\pi\in\Q_{\m,i}}x^{\asc(\pi)}y^{\fplat(\pi)+\sdes(\pi)}=\sum_{\pi\in\widetilde{\Q_{\m,i}}}(xy)^{\ascpp(\pi)}(x+y)^{m+1-i-2\times\ascpp(\pi)},
\end{equation}
where $\widetilde{\Q_{\m,i}}:=\{\pi\in\Q_{\m,i}: \sddes(\pi)=\fdesp(\pi)=0\}$. 

Clearly, the GFS-actions $\varphi_x$'s are involutions and commute. Thus, for any $S\subseteq[n]$ we can define the function $\varphi_S:\Q_{\m}\rightarrow\Q_{\m}$ by $\varphi_S=\prod_{x\in S}\varphi_x$, where the product is the functional compositions. For instance, continuing with the example in Fig.~\ref{GFS-action}, we have $\varphi_{\{1,3\}}(\pi)={\bf3}556533{\bf1}124411$. Hence the group $\Z_2^n$ acts on $\Q_{\m}$ via the function $\varphi_S$. Since the statistic ``$\mdup$'' is invariant under this group action,  it divides $\Q_{\m,i}$ into some disjoint orbits. For each $\pi\in\Q_{\m,i}$, let $\Orb(\pi)=\{g(\pi): g\in\Z_2^n\}$ be the orbit of $\pi$ under the GFS-action. 
Note that $x$ is a free descent-plateau value or a single double descents value of $\pi$ if and only if $x$ is a double ascents value of $\varphi_x(\pi)$. Thus, there exists a unique Stirling permutation $\tilde{\pi}$ in $\Orb(\pi)$ such that $\sddes(\tilde{\pi})=\fdesp(\tilde{\pi})=0$, that is, $\widetilde{\Q_{\m,i}}\bigcap\Orb(\pi)=\{\tilde{\pi}\}$. 
Therefore, we have 
\begin{align*}
\sum_{\sigma\in\Orb(\pi)}x^{\asc(\sigma)}y^{\fplat(\sigma)+\sdes(\sigma)}&=x^{\asc(\tilde{\pi})-\dasc(\tilde{\pi})}y^{\fplat(\tilde{\pi})+\sdes(\tilde{\pi})}(x+y)^{\dasc(\tilde{\pi})}\\
&=(xy)^{\ascpp(\tilde{\pi})}(x+y)^{m+1-i-2\times\ascpp(\tilde{\pi})},
\end{align*}
where the second equality follows from the following relationships
\begin{equation}\label{eq:st1}
\asc(\tilde{\pi})-\dasc(\tilde{\pi})=\fplat(\tilde{\pi})+\sdes(\tilde{\pi})=\ascpp(\tilde{\pi})
\end{equation}
and 
\begin{equation}\label{eq:st2}
\dasc(\tilde{\pi})=m+1-i-2\times\ascpp(\tilde{\pi}).
\end{equation}
Summing over all orbits of $\Q_{\m,i}$ under the GFS-action then gives~\eqref{stir:gam}.

It remains to show the above relationships. Since $\tilde{\pi}$ has neither free descent-plateau nor single double descents, every first plateau must be an ascent-plateau and every single descent is a peak. Thus, we have $\fplat(\tilde{\pi})+\sdes(\tilde{\pi})=\ascpp(\tilde{\pi})$. As each ascent is followed immediately by an ascent, a plateau or a descent, we have $\asc(\tilde{\pi})=\dasc(\tilde{\pi})+\ascpp(\tilde{\pi})$, which proves~\eqref{eq:st1}.  Clearly, we have $\asc(\tilde{\pi})+\plat(\tilde{\pi})+\des(\tilde{\pi})=m+1$ and $\mdup(\tilde{\pi})+\fplat(\tilde{\pi})+\sdes(\tilde{\pi})=\plat(\tilde{\pi})+\des(\tilde{\pi})$. It then follows that $\mdup(\tilde{\pi})+\asc(\tilde{\pi})+\fplat(\tilde{\pi})+\sdes(\tilde{\pi})=m+1$ and so by~\eqref{eq:st1},  
\begin{equation*}
\dasc(\tilde{\pi})=\asc(\tilde{\pi})-\ascpp(\tilde{\pi})=m+1-\mdup(\tilde{\pi})-2\times\ascpp(\tilde{\pi}),
\end{equation*}
which is relationship~\eqref{eq:st2}. This completes the proof of the theorem. 
\end{proof}

%%%%%%%%%%%%%%%%%%%%%%%%%%%%
%%%%%%%%%%%%%%%%%%%%%%%%%%%%
\section{Stability}%%
 %%%%%%%%%%%%%%%%%%%%%%%%%%%%
 \label{sec:stab}
% analytical
   In this section, we  shall  study the  analytic property of the trivariate polynomial $S_{\m}(x,y,z)$.
  From the involution
$$
\pi_1\pi_2\cdots\pi_m\mapsto\pi_m\pi_{m-1}\cdots\pi_1, 
$$
 we know that  $S_{\m}(x,y,z)$ is  symmetric in $x$ and $y$.
Hence, 
the  partial $\gamma$-positivity of $S_{\m}(x,y,z)$ is  equivalent to the  $\gamma$-positivity of the following refined descent polynomials
$$
S_{\m,i}(x)=\sum_{\pi}x^{\des(\pi)},
$$
where the sum runs over all Stirling permutations $\pi\in\Q_{\m}$ with $\plat(\pi)=i$.  
It is known (see~\cite[Remark~7.3.1]{br2}) that the real-rootedness of  a polynomial with symmetric coefficients implies the $\gamma$-positivity of such polynomial. 
 This motivates us to study an alternative approach to the partial $\gamma$-positivity of $S_{\m}(x,y,z)$.  
 
Let us  recall the  stability of multivariate polynomials, which has been developed enormously by Bocrea and Br\"and\'en~\cite{bb, bb2}.
  A polynomial $f \in \mathbb{R}[x_1, \dots, x_n]$ is said to be  \emph{stable} if either $f(x_1, \dots, x_n) \neq 0$  whenever  $ \mbox{Im}(x_i)>0$ for all $i$ or $f$ is identically zero.
  Note that a univariate real polynomial is stable if and only if it has only real roots.
Several multivariate Eulerian polynomials have been shown to be stable, see~\cite{blm, bhvw,chy, vw,zz}.
  
  Our main result of this section is stated as follows.
    \begin{theorem}\label{thm:stable}
  	Let  $p(x,y,z)$ be  a trivariate homogeneous polynomial  with nonnegative coefficients.
  	If  $p(x,y,z)$   is   stable and symmetric in $x$ and $y$, then $p(x,y,z)$ is partial $\gamma$-positive. 
  \end{theorem}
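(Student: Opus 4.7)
The plan is to write $p(x,y,z)=\sum_{i\geq 0}s_i(x,y)z^i$ and show directly that each coefficient polynomial $s_i(x,y)$ is homogeneous $\gamma$-positive. Each $s_i$ automatically inherits from $p$ the following properties: it is homogeneous (of degree $\deg(p)-i$) because $p$ is homogeneous; it has nonnegative coefficients because $p$ does; and it is symmetric in $x$ and $y$ because $p$ is. So the whole problem reduces to showing that each nonzero $s_i(x,y)$ is homogeneous $\gamma$-positive, which by the standard equivalence is equivalent to the univariate polynomial $s_i(x,1)$ being $\gamma$-positive.

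The idea is to extract $s_i(x,y)$ from $p(x,y,z)$ using stability-preserving operations. Concretely,
\[
s_i(x,y)=\frac{1}{i!}\,\frac{\partial^{\,i}p}{\partial z^{i}}(x,y,0).
\]
Each of these operations preserves stability: differentiation $\partial_z$ of a stable polynomial yields a stable polynomial (or zero) by a Hurwitz-type argument, and the specialization $z\to 0$ of a stable polynomial again yields a stable (or zero) polynomial, since $0\in\mathbb{R}$ and one may approach it from the upper half-plane invoking Hurwitz's theorem (both facts are standard in the stability theory of Borcea--Br\"and\'en~\cite{bb,bb2}). Nonnegativity of the coefficients of $p$ prevents cancellation, so no intermediate polynomial is identically zero unless $s_i$ itself vanishes. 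Thus each $s_i(x,y)$ is a stable bivariate polynomial.

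Now specialize $y\to 1$. Since $1\in\mathbb{R}$, this also preserves stability, so $s_i(x,1)$ is a stable univariate polynomial — equivalently, it is real-rooted. Moreover, by homogeneity of $s_i(x,y)$ and its symmetry in $x,y$, the coefficient sequence of $s_i(x,1)$ is palindromic and nonnegative. The classical result recalled in the excerpt (see \cite[Remark~7.3.1]{br2}) now applies: a real-rooted univariate polynomial with palindromic nonnegative coefficients is $\gamma$-positive, so there exist $\gamma_{i,k}\geq 0$ with
\[
s_i(x,1)=\sum_{k}\gamma_{i,k}\,x^{k}(1+x)^{\deg(s_i)-2k}.
\]
Homogenizing recovers $s_i(x,y)=\sum_k \gamma_{i,k}(xy)^k(x+y)^{\deg(s_i)-2k}$, which is exactly the homogeneous $\gamma$-positive expansion. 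Summing over $i$ yields the partial $\gamma$-positive expansion of $p(x,y,z)$.

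The only delicate point is the justification that stability is preserved both under $\partial_z$ and under $z\mapsto 0$; these are part of the standard toolbox (and also under $y\mapsto 1$), but it is worth citing them explicitly, and checking that symmetry in $x,y$ together with homogeneity of $p$ really do push through to each $s_i$. Beyond that, the argument is essentially the observation that ``stability + symmetry in $x,y$ + homogeneity + nonnegative coefficients'' specializes to the hypotheses of the classical real-rootedness criterion for $\gamma$-positivity on every slice of constant $z$-degree.
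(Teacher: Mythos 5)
Your proposal is correct and follows essentially the same route as the paper: decompose $p=\sum_i s_i(x,y)z^i$, prove each slice $s_i(x,y)$ is stable via stability-preserving operations, then specialize $y=1$ and combine real-rootedness with the palindromicity forced by symmetry and homogeneity to get homogeneous $\gamma$-positivity of each slice. The only (harmless) difference is how the slice is isolated: you apply $\partial_z^i$ and then set $z=0$ (a real specialization, covered by the paper's Lemma on stability preservers), whereas the paper applies $\partial_z^k$, the inversion $z\mapsto -1/z$, and then $\partial_z^{d-k}$; both are standard, and your extraction is if anything more direct.
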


  In order  to prove Theorem~\ref{thm:stable},  we shall introduce some basic results about stability-preserving linear operators.
  \begin{lemma}[{See~\cite[Lemma 2.4]{w}}]\label{lem:stable}
  	Given $i,j \in [n]$, the following operations preserve  stability of $f \in \mathbb{R}[x_1, \dots, x_n]$:
  	\begin{enumerate}
  		\item \emph{Differentiation:} $f \mapsto \partial f/\partial x_i.$
  		\item \emph{Diagonalization:} $f \mapsto f|_{x_i=x_j}.$
  		\item	\emph{Specialization:} for $a \in \mathbb{R}$, $f\mapsto f|_{x_i = a}.$
  	\end{enumerate}
  \end{lemma}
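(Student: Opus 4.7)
The plan is to write $p(x,y,z)=\sum_{i\geq 0} s_i(x,y)\,z^i$ and prove that each slice $s_i(x,y)$ is homogeneous $\gamma$-positive, which is exactly what partial $\gamma$-positivity demands. Since $p$ is homogeneous of some total degree $n$ with nonnegative coefficients, each $s_i(x,y)$ is a homogeneous bivariate polynomial of degree $n-i$ with nonnegative coefficients; the hypothesis that $p$ is symmetric in $x$ and $y$ transfers coefficient-wise in $z$, so every $s_i$ is itself symmetric in $x$ and $y$.

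Next I would argue that each $s_i(x,y)$ is stable. The coefficient-extraction identity
\[
s_i(x,y)=\frac{1}{i!}\left.\frac{\partial^{\,i}p}{\partial z^{\,i}}\right|_{z=0}
\]
expresses $s_i(x,y)$ by $i$ applications of differentiation with respect to $z$ followed by one specialization $z=0$, both of which preserve stability by parts (1) and (3) of Lemma~\ref{lem:stable}. Hence $s_i(x,y)$ is stable in the two remaining variables. Applying Lemma~\ref{lem:stable}(3) once more with $y=1$ produces a stable univariate polynomial $s_i(x,1)$, which is therefore real-rooted.

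Finally, I would combine this with the palindromic structure. Writing $s_i(x,y)=\sum_k a_k x^k y^{n-i-k}$, the symmetry in $x,y$ forces $a_k=a_{n-i-k}$, so $s_i(x,1)$ is a palindromic polynomial with nonnegative coefficients. Invoking the standard fact (already cited in the paper as~\cite[Remark~7.3.1]{br2}) that a real-rooted polynomial with symmetric nonnegative coefficients is $\gamma$-positive, we conclude that $s_i(x,1)$, and hence the homogenization $s_i(x,y)$, is $\gamma$-positive. Summing over $i$ yields the partial $\gamma$-positivity of $p(x,y,z)$.

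The proof is essentially a three-line packaging of Lemma~\ref{lem:stable} together with the palindromic-plus-real-rooted implies $\gamma$-positive principle, so I do not expect any serious obstacle. The only minor subtlety to track is the boundary case where some $s_i(x,y)$ vanishes identically: the zero polynomial is stable by convention and is trivially $\gamma$-positive, so it causes no issue. I would also briefly remark that homogeneous $\gamma$-positivity of $s_i(x,y)$ is literally equivalent to $\gamma$-positivity of $s_i(x,1)$ (expanding $(x+y)^{n-i-2k}(xy)^k$ versus $(1+x)^{n-i-2k}x^k$), so the conclusion matches the definition given in the introduction.
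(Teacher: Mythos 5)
Your proposal does not prove the statement at hand. The statement is Lemma~\ref{lem:stable}: that differentiation, diagonalization, and real specialization preserve stability of a polynomial in $\mathbb{R}[x_1,\dots,x_n]$. What you wrote is instead a proof of Theorem~\ref{thm:stable} (partial $\gamma$-positivity of a homogeneous stable polynomial symmetric in $x$ and $y$), and it explicitly \emph{invokes} parts (1) and (3) of Lemma~\ref{lem:stable} to establish stability of the slices $s_i(x,y)$. As a proof of the lemma itself this is circular: the stability-preserving properties whose proof was requested are assumed as known. (For what it is worth, your argument is a correct, and even slightly streamlined, proof of Theorem~\ref{thm:stable}: extracting $s_i$ as $\frac{1}{i!}\,\partial_z^i p\,|_{z=0}$ avoids the paper's detour through the inversion $z\mapsto -1/z$ followed by a second round of differentiation. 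But that theorem is not the statement you were asked to prove.)

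A genuine proof of Lemma~\ref{lem:stable} needs analytic input, which is why the paper itself does not prove it but cites Wagner~\cite[Lemma 2.4]{w}. Concretely: for (2), diagonalization is immediate, since a point with $\mathrm{Im}(x_k)>0$ for all $k$ and $x_i=x_j$ is still a point of the open upper half-plane polydisc on which $f$ is assumed non-vanishing. For (1), freeze the other variables at points with positive imaginary part; the claim reduces to the univariate fact that if a polynomial has no zeros in the open upper half-plane then neither does its derivative, which follows from the Gauss--Lucas theorem because the closed lower half-plane is convex. For (3), specialization at a real $a$ cannot be done by direct evaluation, since the point $x_i=a$ lies on the boundary of the region where non-vanishing is guaranteed; one substitutes $x_i=a+i\varepsilon$ with $\varepsilon>0$, which preserves stability because it is an interior evaluation, and then lets $\varepsilon\to 0^{+}$, invoking Hurwitz's theorem: a locally uniform limit of nowhere-vanishing analytic functions on a connected open set is either identically zero or nowhere vanishing. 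None of these ingredients appears in your write-up.
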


% Given two real-rooted polynomials $f(x)$ and $g(x)$ with positive leading coefficients,  we say that  {$g(x)$ \textit{interlaces} $f(x)$}, denoted $g(x)\sep f(x)$, if
%\begin{align*}
%\cdots\le s_2\le r_2\le s_1\le r_1,
%\end{align*}
%where $\{r_j\}$ and $\{s_k\}$ are the sets of roots of $f(x)$ and $g(x)$, respectively.
%Following Liu and Wang~\cite{{lw}},
%a sequence $\{f_n(x)\}_{n\geq 0}$ of real polynomials with positive leading coefficients is called \emph{a generalized Sturm sequence} if the polynomial $f_{n}(x)$ has only real roots and  $f_n(x)\sep f_{n+1}(x)$ for every $n\ge 0$.
%We also need the following lemma.
%\begin{lemma}[{See~\cite[Theorem~2.3]{zhang}}]\label{thm:z}
%	Suppose that  $F(x,y)=\sum_{j=0}^n f_j(x)y^j$ is a bivariate polynomial with nonnegative coefficients. If  $F(x,y)$ is stable, then the polynomial sequence $\{f_{n-j}(x)\}_{j=0}^{n}$ forms a generalized Sturm sequence.
%\end{lemma}

  Now it is time for us to prove  Theorem~\ref{thm:stable}.
  \begin{proof}[{\bf Proof of Theorem~\ref{thm:stable}}]
  Suppose that 	$p(x,y,z)=\sum_{i=0}^{d}  s_i(x,y)z^i$. 
  	We first prove that for each $0\le k \le d$, the polynomial $s_k(x,y)$ is stable.
  	By taking the $k$-th order partial derivative with respect to $z$ of the real stable polynomial $p(x,y,z)$ it follows from Lemma \ref{lem:stable} that
  	$\sum_{i=k}^{d} (i)_{k}s_i(x,y) z^{i-k}$
  	is  stable, where $(i)_k=i(i-1)\cdots (i-k+1)$.
  	Note that if $z$ is in the upper half plane then so is $-1/z$. Hence, we obtain the  stability of
%  	z^{d-k} \sum_{i=k}^{d} (i)_{k} s_i(x,y)(-\frac{1}{z})^{i-k} 
$\sum_{i=k}^{d} (-1)^{i-k} (i)_{k} s_i(x,y) z^{d-i}.$
  	Similarly, by taking the $(d-k)$-th  order the partial derivative  with respect to $z$, we get the  stability of $s_k(x,y) $.

We next  prove that $s_{k}(x,y)$ is homogeneous $\gamma$-positive.
By Lemma~\ref{lem:stable}, we get that $s_k(x,1)$ is real-rooted. 
Since  $s_{k}(x,y)$ is symmetric in $x$ and $y$, we know that the coefficients of $s_k(x,1)$ are symmetric.
Hence, the polynomial  $s_k(x,1)$ is  $\gamma$-positive.
Since $s_{k}(x,y)$ is homogeneous in $x$ and $y$, we obtain the homogeneous $\gamma$-positivity of $s_{k}(x,y)$. 
This completes the proof.
  \end{proof}

We proceed to use Theorem~\ref{thm:stable} to prove the partial $\gamma$-positivity of $S_{\m}(x,y,z)$. In order to do this, it suffices to show that the stability $S_{\m}(x,y,z)$. For an element $\pi\in\Q_{\m}$, let $\mathcal{D}(\pi)$ and $\mathcal{A}(\pi)$ be the set of descents and ascents of $\pi$, respectively. If $\kappa=\max_{i} m_i$ and $1\leq j<\kappa$, define $\mathcal{P}_j(\pi)$ to be the set of indices $i$ such that $\pi_i=\pi_{i+1}$ where $\pi_1\cdots\pi_{i-1}$ contains $j-1$ instances of $\pi_i$. 
Haglund and  Visontai~\cite[Theorem 3.5]{hm} showed that the multivariate polynomial
$$
\sum_{\pi\in\Q_{\m}}\prod_{i\in\mathcal{D}(\pi)}x_{\pi_i} \prod_{i\in\mathcal{A}(\pi)} y_{\pi_{i}}  \prod_{j=1}^{\kappa-1}\biggl( \prod_{i\in\mathcal{P}_j(\pi)} z_{j,\pi_i} \biggr)
$$
is stable. 
By diagonalizing the  variables $x_{\pi_i}$, $y_{\pi_{i}}$ and $z_{j,\pi_i}$ to $x$, $y$ and  $z$ respectively, 
it follows from Lemma~\ref{lem:stable} that the polynomial  
$S_{\m}(x,y,z)$
is stable.
As an application of this result we  get the real-rootedness of $S_{\m, i}(x)$.
 \begin{corollary}
 	For any $\m\in\mathbb{P}^n$ with $m_1+m_2+\cdots m_n=m$ and  any  $0\le i \le m-1$,  the polynomial $S_{\m, i}(x)$ has only real roots. 
 \end{corollary}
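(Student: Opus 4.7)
The plan is to obtain the real-rootedness of $S_{\m,i}(x)$ as a direct corollary of the stability of the trivariate polynomial $S_{\m}(x,y,z)$, which has already been established in the paragraph immediately preceding the corollary (via the Haglund--Visontai stability result combined with the diagonalization part of Lemma~\ref{lem:stable}).

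Starting from the expansion $S_{\m}(x,y,z)=\sum_{i=0}^{m-1}S_{\m,i}(x,y)\,z^{i}$, where $S_{\m,i}(x,y)=\sum_{\pi}x^{\asc(\pi)}y^{\des(\pi)}$ with the sum ranging over $\pi\in\Q_{\m}$ with $\plat(\pi)=i$, I would first argue that each coefficient $S_{\m,i}(x,y)$ is itself stable. This is exactly the manipulation carried out at the start of the proof of Theorem~\ref{thm:stable}: take the $i$-th partial derivative in $z$ (preserving stability by Lemma~\ref{lem:stable}(1)), then exploit the fact that $z\mapsto -1/z$ maps the upper half-plane to itself to reverse the roles of high and low degree terms in $z$, then take the $(m-1-i)$-th partial derivative in $z$. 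What remains is, up to a nonzero constant, the bivariate polynomial $S_{\m,i}(x,y)$, which is therefore stable.

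Next, I would apply Lemma~\ref{lem:stable}(3) (specialization) to set $y=1$ in the stable polynomial $S_{\m,i}(x,y)$, obtaining that the univariate polynomial $S_{\m,i}(x,1)=S_{\m,i}(x)$ is stable. Since a univariate polynomial with real coefficients is stable if and only if it has only real roots, the conclusion follows.

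I do not anticipate any real obstacle: the work has already been done upstream, both in proving that $S_{\m}(x,y,z)$ is stable and in isolating the argument that extracts stability of individual $z$-coefficients. The only nontrivial observation is the $z\mapsto -1/z$ trick used inside the proof of Theorem~\ref{thm:stable}, but this is standard and fully recorded there. The corollary is thus essentially a repackaging: stability of the trivariate generating polynomial, plus coefficient extraction, plus specialization of $y$.
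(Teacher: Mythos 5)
Your proposal is correct and takes essentially the same route as the paper: the paper likewise deduces the corollary from the stability of $S_{\m}(x,y,z)$ by extracting the $z$-coefficients exactly as in the proof of Theorem~\ref{thm:stable} (derivative in $z$, the $z\mapsto -1/z$ inversion, derivative again) and then specializing a variable to $1$ via Lemma~\ref{lem:stable}. The only micro-detail to note is that setting $y=1$ produces the ascent polynomial, which equals the descent polynomial $S_{\m,i}(x)$ only after invoking the $x\leftrightarrow y$ symmetry coming from the reversal involution (or, more directly, one can specialize $x=1$ instead).
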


%As an application of  Lemma~\ref{thm:z}, we know that $S_{\m, i}(x)$ is real-rooted. 
% \begin{corollary}\label{cor:Sturm}
% 	For any $\m\in\mathbb{P}^n$ with $m_1+m_2+\cdots m_n=m$,  the polynomial has only real roots $S_{\m, i}(x)$  and furthermore the polynomial sequence $\{S_{\m, m-1-i}(x)\}_{i=0}^{m-1}$ forms  a generalized Sturm sequence.
% \end{corollary}

 %%%%%%%%%%%%%%%%%
\section*{Acknowledgments}%%%
%%%%%%%%%%%%%%%%%

The first author was supported by the National Science Foundation of China grant 11871247 and the project of Qilu Young Scholars of Shandong University.
The second author was supported by the National Science Foundation of China grant 11571235. 
The third author was supported by the National Science Foundation of China grant 11701424.

 \end{document}